\theoremstyle{plain}
\newtheorem{theorem}{Theorem}[section]
\theoremstyle{definition}
\newtheorem{definition}{Definition}[section]
\newtheorem{remark}{\textnormal{\textbf{Remark}}}
\theoremstyle{remark}
\newtheorem{example}{Example}
\numberwithin{equation}{section}
\begin{document}
\title[Iterated Function Systems with the Average Shadowing Property]%
{Iterated Function Systems with the Average Shadowing Property}
\author[Mehdi Fatehi Nia]%
{Mehdi Fatehi Nia}

\newcommand{\acr}{\newline\indent}

\address{\llap{*\,}Department of Mathematics\acr
Yazd University\acr
 P. O. Box 89195-741 Yazd\acr
  Iran }
\email{fatehiniam@yazd.ac.ir}


\subjclass[2010]{Primary  37C50; Secondary: 37C15.}
\keywords{Average shadowing, chain recurrent, iterated function systems, pseudo orbit, uniformly contracting.}

\begin{abstract}The average shadowing property is considered for set-valued dynamical systems, generated
by parameterized \emph{IFS}, which are uniformly contracting, or conjugacy, or products
of such ones. We also prove that if a continuous surjective IFS $\mathcal{F}$ on a
compact metric space $X$ has the average shadowing property, then
every point $x$ is chain recurrent. Moreover, we introduce some examples and investigate  the relationship between the original shadowing property and shadowing property for IFS. For example, this is proved that the Sierpinski IFS has the average shadowing property. Then we shows that there is an IFS $\mathcal{F}$ on  $S^{1}$ such that $\mathcal{F}$ does not satisfy the average shadowing property but every point $x$ in  $S^{1}$ is a chain recurrent.
\end{abstract}

\maketitle

\section{Introduction}
 The shadowing property of a dynamical system  is one of the most important notions in dynamical systems(see \cite{[KP],[KS]}). The notion of average shadowing property was introduced by Michael Blank in \cite{[MB]} in order to study chaotic dynamical
systems, which is a good tool to characterize Anosov diffeomorphisms. The average shadowing property was further studied by several authors,
with particular emphasis on connections with other notions known from topological dynamics, or more narrowly, shadowing
theory (e.g. see \cite{[RG1],[Nb],[PZ]}). \\Iterated function systems( IFS), are used for the construction of deterministic fractals and have found numerous applications, in
particular to image compression and image processing \cite{[B]}. Important notions in
dynamics like attractors, minimality, transitivity, and shadowing can be extended to
IFS (see \cite{[BV],[BVA],[GG],[GG1]}). The authors defined the shadowing property for a parameterized iterated function system and prove that if a parameterized IFS is uniformly contracting or expanding, then it has the shadowing property\cite{[GG]}. They also generalized the shadowing property on affine parameterized IFS and  prove the following theorem:
\begin{theorem}
Given a closed nonempty subset $A \subset \mathbb{C}$, situated strictly
inside or strictly outside the unit circle. For any closed disc centered at $0$ with radius $r > 1$ in $\mathbb{C}$ and
any subset $B \subset \mathbb{C} $ the parameterized IFS $ F = {\mathbb{C}; f_{a,b} | a\in A, b \in B}$, with
$f_{a,b}(z) = az + b$, has the Shadowing Property on $Z_{+}$.
\end{theorem}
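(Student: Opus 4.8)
The plan is to reduce the statement to two uniform estimates according to the dichotomy in the hypothesis, after first extracting uniform hyperbolicity constants from the topological assumption on $A$. Since $A$ is closed in $\CC$ and lies strictly inside (resp. strictly outside) the unit circle, a short compactness argument shows that $s := \sup_{a \in A}|a| < 1$ in the contracting case and $u := \inf_{a \in A}|a| > 1$ in the expanding case: if, say, $\sup_{a\in A}|a| = 1$, then a sequence $a_k \in A$ with $|a_k| \to 1$ would have a subsequence converging to a point of modulus $1$, which lies in $A$ by closedness, contradicting that $A$ is strictly inside. I would then fix the working definitions: a $\delta$-pseudo orbit is a sequence $(x_n)_{n \ge 0}$ together with parameters $(a_n,b_n)$, $a_n \in A$, $b_n \in B$, such that $\eta_n := a_n x_n + b_n - x_{n+1}$ satisfies $|\eta_n| < \delta$; shadowing asks for a genuine orbit $(y_n)$ with $y_{n+1} = a_n y_n + b_n$ (I will shadow using the \emph{same} parameters, which yields the strongest form) and $|x_n - y_n| < \varepsilon$ for all $n$. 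The closed disc of radius $r>1$ serves only to supply a fixed bounded reference region in which the pseudo orbits live together with a uniform metric; the two estimates below are otherwise global on $\CC$.

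In the contracting case I would set $y_0 = x_0$ and let $(y_n)$ run with the same parameters. Writing $d_n = x_n - y_n$ and subtracting the two recursions gives $d_{n+1} = a_n d_n - \eta_n$, hence $|d_{n+1}| \le s|d_n| + \delta$. Since $d_0 = 0$, induction yields $|d_n| \le \delta(1 + s + \cdots + s^{n-1}) < \delta/(1-s)$ for every $n$, so the choice $\delta = (1-s)\varepsilon$ gives $|x_n - y_n| < \varepsilon$ and disposes of this case.

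The expanding case is the real obstacle, because with $|a_n| \ge u > 1$ the naive choice $y_0 = x_0$ makes $|d_n|$ grow like $u^{n}$: the forward errors are exponentially amplified rather than damped. The fix is to pre-correct the initial condition. Solving $d_{n+1} = a_n d_n - \eta_n$ explicitly gives $d_n = A_n\big(d_0 - \sum_{k=0}^{n-1}\eta_k/A_{k+1}\big)$, where $A_n := a_0 a_1 \cdots a_{n-1}$ and $A_0 = 1$. Because $|A_{k+1}| \ge u^{k+1}$, the series $\sigma := \sum_{k=0}^{\infty}\eta_k/A_{k+1}$ converges absolutely, with $|\sigma| \le \delta/(u-1)$. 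Choosing $y_0 := x_0 - \sigma$, i.e. $d_0 = \sigma$, the partial sums telescope to $d_n = A_n \sum_{k=n}^{\infty}\eta_k/A_{k+1}$, and the bound $|A_n|/|A_{k+1}| \le u^{-(k-n+1)}$ for $k \ge n$ gives $|d_n| \le \delta \sum_{j\ge 1} u^{-j} = \delta/(u-1)$ uniformly in $n$. Hence $\delta = (u-1)\varepsilon$ again yields $|x_n - y_n| < \varepsilon$.

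I expect the only genuinely delicate points to be in the expanding case: (i) justifying the absolute convergence of the correcting series $\sigma$ and the uniform telescoped bound, where the lower expansion constant $u>1$ is used decisively; and (ii) checking that the shifted initial point $y_0 = x_0 - \sigma$ and the resulting orbit are admissible in the chosen setting — here the disc of radius $r>1$ and the arbitrariness of $B$ ensure there is no obstruction to realizing $y_0$, since the maps act on all of $\CC$ and $\sigma$ is of order $\delta$. By contrast the contracting case is routine. Finally I would remark that, since the shadowing parameters were taken equal to the pseudo orbit parameters, the construction proves the strong form, from which the stated Shadowing Property on $\mathbb{Z}_{+}$ follows at once.
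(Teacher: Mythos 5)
Your proposal is correct, but there is little in the paper to compare it against: this statement is quoted in the introduction from Glavan and Gutu \cite{[GG]} and is not proved here. The closest internal relative is the proof of Theorem~\ref{tc}, which handles uniformly contracting IFS via the telescoping estimate $d(x_i,y_i)\le\alpha_{i-1}+\beta\alpha_{i-2}+\cdots+\beta^{i}M$; your contracting case is exactly that estimate specialized to affine maps, with the uniform ratio $s=\sup_{a\in A}|a|<1$ correctly extracted from closedness plus strict containment (a sequence with $|a_k|\to1$ is bounded, hence subconverges to a modulus-one point of the closed set $A$, a contradiction). The expanding case is the part the present paper never addresses, and your mechanism --- solving $d_{n+1}=a_nd_n-\eta_n$ explicitly and pre-correcting the initial condition by the absolutely convergent series $\sigma=\sum_{k\ge0}\eta_k/A_{k+1}$, so that the remaining tail is damped by $|A_n/A_{k+1}|\le u^{-(k-n+1)}$ --- is the standard and correct one, and is essentially the argument of \cite{[GG]}. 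Two minor remarks: your final bound is $|d_n|\le\delta/(u-1)=\varepsilon$ rather than a strict inequality, so shrink $\delta$ slightly or invoke the paper's convention $d(x_n,y_n)\le\epsilon$; and you are right that the closed disc of radius $r$ and the set $B$ are inert in the estimates, since $b_n$ cancels in the difference equation, so your argument in fact needs only that $A$ be closed and lie strictly inside or strictly outside the unit circle.
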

 The present paper concerns the average shadowing property for parameterized IFS and some important results about average shadowing property are extended to iterated function systems. Specially, we prove that each uniformly contracting parameterized IFS has the average shadowing property on $\mathbb{Z}_{+}$. Then we introduce a condition such that if an IFS,  $\mathcal{F}$, on unit circle satisfies this condition then it does not have the average shadowing property. In Section $4$, we give some examples illustrating our results.
In \cite{[AH]} the authors have shown that a minimal homeomorphism on a compact metric space does not have the shadowing property. In Example \ref{e3} we show that there is a minimal IFS which has the shadowing property. Example \ref{e7} shows that there is an \emph{IFS}, $\mathcal{F}$, on  $S^{1}$ such that $\mathcal{F}$ does not satisfy the average shadowing property but every point $x$ in  $S^{1}$ is a chain recurrent.
\section{preliminaries}\par In \cite{[RG1],[Nb],[PZ]} the average shadowing property is defined and discussed for continuous
maps. Let $f:X\longrightarrow X$ be a continuous map. For $\delta>0$, a sequence $\{x_{i}\}_{i\geq 0}$ of points in $ X$ is called a $\delta-$average-pseudo-orbit of $f$ if there is a number $N=N(\delta)$ such that for all $n\geq N$ and $k\geq 0$,
 $$\frac{1}{n}\sum_{i=0}^{n-1}d(f(x_{i+k}),x_{i+k+1})<\delta.$$
 We say that $f$ has the average shadowing property if for every $\epsilon>0$ there is $\delta>0$ such that every $\delta-$average-pseudo-orbit
 $\{x_{i}\}_{i\geq 0}$ is $\epsilon-$shadowed in average by some point $y\in X$, that is,
 $$\limsup_{n\rightarrow\infty}\frac{1}{n}\sum_{i=0}^{n-1}d(f^{i}(y),x_{i})<\epsilon.$$
\par We use the notion of chain recurrent points to study chaotic dynamical systems.\par The set $CR(f)$ consisting of all chain recurrent pints, i.e., such points $x\in X$ that for any $\delta>0$, there exists a periodic $\delta-$pseudo-orbit through $x$, is called the chain recurrent set of the discrete dynamical system $(X,f).$
\\Let $(X,d)$ be a complete metric space. Let us recall that a \emph{parameterized Iterated Function System(IFS)} $\mathcal{F}=\{X; f_{\lambda}|\lambda\in\Lambda\}$ is any family of continuous mappings $f_{\lambda}:X\rightarrow X,~\lambda\in \Lambda$, where $\Lambda$ is a finite nonempty set (see\cite{[GG]}).\\ Let $T=\mathbb{Z}$ or $T=\mathbb{Z}_{+}= \{n\in \mathbb{Z}:n\geq0\}$ and $\Lambda^{T}$ denote the set
of all infinite sequences $\{\lambda_{i}\}_{i\in T}$ of symbols belonging to $\Lambda$. A typical element of $\Lambda^{\mathbb{Z}_{+}}$
 can be denoted as $\sigma= \{\lambda_{0},\lambda_{1},...\}$ and we use the shorted notation $$\mathcal{F}_{\sigma_{n}}=f_{\lambda_{n-1}}o ...o f_{\lambda_{1}} o f_{\lambda_{0}}.$$ A sequence $\{x_{n}\}_{n\in T}$ in $X$ is called an orbit of the IFS $\mathcal{F}$ if there exist $\sigma\in \Lambda^{T}$ such that $x_{n+1}=f_{\lambda_{n}}(x_{n})$, for each $\lambda_{n}\in \sigma$.\\ Given $\delta>0$, a sequence $\{x_{n}\}_{n\in T}$ in $X$ is called a $\delta-$pseudo orbit of $\mathcal{F}$ if there exist $\sigma\in\Lambda^{T}$ such that for every $\lambda_{n}\in \sigma$, we have \\$d(f_{\lambda_{n}}(x_{n}),x_{n+1})<\delta$.\\One says that the IFS $ \mathcal{F}$ has the \emph{shadowing property }(on $T$) if, given $\epsilon>0$, there exists $\delta>0$ such that for any $\delta-$pseudo orbit $\{x_{n}\}_{n\in T}$ there exist an orbit $\{y_{n}\}_{n\in T}$, satisfying the inequality $d(x_{n},y_{n})\leq \epsilon$ for all $n\in T$. In this case one says that the $\{y_{n}\}_{n\in T}$ or  $y_{0}$, $\epsilon-$ shadows the $\delta-$pseudo orbit $\{x_{n}\}_{n\in T}$.\\
 For $\delta > 0$, a sequence $\{x_{i}\}_{i\geq 0}$ of points in $X$ is called a
$\delta$-average pseudo-orbit of $ \mathcal{F}$ if there exists a natural number $N = N(\delta) > 0$ and\\  $\sigma=\{\lambda_{0},\lambda_{1},\lambda_{2},...\}$ in  $\Lambda^{\mathbb{Z}_{+}}$,
such that for all $n \geq N$,
$$\frac{1}{n}\sum_{i=0}^{n-1}d(f_{\lambda_{i}}(x_{i}),x_{i+1})<\delta.$$
A sequence $\{x_{i}\}_{i\geq 0}$ in $X$ is said to be $\epsilon-$shadowed in average by a point $z$ in $X$ if there exist $\sigma\in\Lambda^{\mathbb{Z}_{+}}$ such that
$$\limsup_{n\rightarrow\infty}\frac{1}{n}\sum_{i=0}^{n-1}d(\mathcal{F}_{\sigma_{i}}(z),x_{i})<\epsilon.$$ A continuous IFS $\mathcal{F}$ is said to have the  average shadowing property if for every $\epsilon>$ there is $\delta>0$ such that every $\delta-$average-pseudo-orbit of $f$ is $\epsilon-$shadowed in average by some point in $X$.\\
Please note that if $\Lambda$ is a set with one member then the parameterized IFS $ \mathcal{F}$ is an ordinary discrete dynamical system.
In this case the average shadowing property for $\mathcal{F}$ is ordinary average shadowing property for a discrete dynamical system.\\The parameterized IFS
$ \mathcal{F}=\{X; f_{\lambda}|\lambda\in\Lambda\}$ is \emph{uniformly contracting} if there exists
$$\beta= sup_{\lambda\in\Lambda} sup_{x\neq y}\frac{d(f_{\lambda}(x),f_{\lambda}(y))}{d(x,y)} $$and this number called also the \emph{contracting ratio},
 is less than one\cite{[GG]}.
  \begin{definition}
  Suppose $(X,d)$ and $(Y,d^{'})$ are compact metric spaces and $\Lambda$ is a finite set. Let $ \mathcal{F}=\{X; f_{\lambda}|\lambda\in\Lambda\}$ and $ \mathcal{G}=\{Y; g_{\lambda}|\lambda\in\Lambda\}$ are two $IFS$ which $f_{\lambda}:X\rightarrow X$ and  $g_{\lambda}:Y\rightarrow Y$ are continuous maps for all $\lambda\in\Lambda$. We say that $ \mathcal{F}$ is topologically conjugate to $ \mathcal{G}$  if there is a homeomorphism  $h:X\rightarrow Y$  such that $g_{\lambda}=h o f_{\lambda} o h^{-1}$, for all $\lambda\in\Lambda$. In this case, $h$ is called a topological conjugacy. \end{definition}
 \section{\textbf{ Average Shadowing Property for Iterated Function Systems}}
In this section we investigate the structure of parameterized \emph{IFS} with the average shadowing property.
 It is well known that if $f:X\rightarrow X$ and $g:Y\rightarrow Y$ are conjugated then $f$ has the shadowing property if and only if so does $g$. In the next theorem we extend this property for \emph{iterated function systems }.
\begin{theorem}
Suppose $(X,d)$ and $(Y,d^{'})$ are compact metric spaces and $\Lambda$ is a finite set. Let $ \mathcal{F}=\{X; f_{\lambda}|\lambda\in\Lambda\}$ and $ \mathcal{G}=\{Y; g_{\lambda}|\lambda\in\Lambda\}$ are two conjugated IFS with topological conjugacy $h$.  If there exist two positive real number, $K, ~L$ such that $L<\frac{d^{'}(h(p),h(q))}{d(p,q)}<K$, for all $p,q\in X$. Then $\mathcal{F}$ has the average shadowing property if and only if so does $\mathcal{G}$.
\end{theorem}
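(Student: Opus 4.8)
The plan is to unfold the bi-Lipschitz hypothesis and show that $h$ transports both average-pseudo-orbits and average-shadowing between the two systems, up to controlled multiplicative constants. First I would record that the assumption $L < d'(h(p),h(q))/d(p,q) < K$ is exactly the bi-Lipschitz estimate $L\, d(p,q) \le d'(h(p),h(q)) \le K\, d(p,q)$ for all $p,q \in X$, and that applying the same computation to $p' = h(p)$, $q' = h(q)$ shows $h^{-1}$ satisfies the analogous estimate with constants $1/K$ and $1/L$. Because the whole setup is symmetric under interchanging $(\mathcal{F}, h, K, L)$ with $(\mathcal{G}, h^{-1}, 1/L, 1/K)$, it suffices to prove a single implication: if $\mathcal{G}$ has the average shadowing property, then so does $\mathcal{F}$.

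The key structural fact I would establish first, by an easy induction on $n$, is that the conjugacy is inherited by all finite compositions: from $g_{\lambda} = h \circ f_{\lambda} \circ h^{-1}$ one gets $\mathcal{G}_{\sigma_{n}} = h \circ \mathcal{F}_{\sigma_{n}} \circ h^{-1}$ for every $\sigma \in \Lambda^{\mathbb{Z}_{+}}$ and every $n$. With this in hand, the translation of pseudo-orbits is immediate: given a $\delta$-average-pseudo-orbit $\{x_{i}\}$ of $\mathcal{F}$ with associated sequence $\sigma$ and threshold $N$, I would apply $h$ termwise and use $d'(g_{\lambda_{i}}(h(x_{i})), h(x_{i+1})) = d'(h(f_{\lambda_{i}}(x_{i})), h(x_{i+1})) \le K\, d(f_{\lambda_{i}}(x_{i}), x_{i+1})$; averaging over $i$ shows that $\{h(x_{i})\}$ is a $K\delta$-average-pseudo-orbit of $\mathcal{G}$ for the same $\sigma$ and the same $N$.

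Now, given $\epsilon > 0$, I would invoke the average shadowing property of $\mathcal{G}$ for the target accuracy $\epsilon' = L\epsilon$, obtaining a $\delta' > 0$, and then set $\delta = \delta'/K$. Any $\delta$-average-pseudo-orbit $\{x_{i}\}$ of $\mathcal{F}$ is then carried by $h$ to a $\delta'$-average-pseudo-orbit of $\mathcal{G}$, which is $\epsilon'$-shadowed in average by some $z' \in Y$ along some $\tau \in \Lambda^{\mathbb{Z}_{+}}$. Setting $z = h^{-1}(z')$ and using the composition identity, $\mathcal{G}_{\tau_{i}}(z') = h(\mathcal{F}_{\tau_{i}}(z))$, so the lower Lipschitz bound gives $d(\mathcal{F}_{\tau_{i}}(z), x_{i}) \le \frac{1}{L}\, d'(\mathcal{G}_{\tau_{i}}(z'), h(x_{i}))$. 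Averaging and taking $\limsup$ yields a bound $< \epsilon'/L = \epsilon$, so $z$ $\epsilon$-shadows $\{x_{i}\}$ in average along $\tau$. This proves the implication, and the reverse follows by the symmetry noted above.

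The argument is essentially bookkeeping, so I expect no deep obstacle; the only points requiring care are (i) keeping the roles of the two constants straight—the upper constant $K$ controls the inflation of the pseudo-orbit defect while the lower constant $L$ is what converts the shadowing estimate back, which forces the choices $\epsilon' = L\epsilon$ and $\delta' = K\delta$—and (ii) checking that the sequence $\sigma$ certifying the pseudo-orbit and the (possibly different) sequence $\tau$ certifying the shadowing are preserved verbatim under $h$, which is guaranteed precisely because $h$ conjugates each generator $f_{\lambda}$ to $g_{\lambda}$ index-by-index.
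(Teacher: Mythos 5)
Your argument is correct and is essentially the paper's proof run in the mirror direction: the paper pulls a pseudo-orbit of $\mathcal{G}$ back through $h^{-1}$ (using the lower constant $L$), shadows it in $X$, and pushes the shadowing orbit forward through $h$ (using $K$), while you push forward a pseudo-orbit of $\mathcal{F}$ and pull the shadow back, with the same bookkeeping of constants. Your explicit remark that the bi-Lipschitz hypothesis is symmetric under replacing $h$ by $h^{-1}$ (with constants $1/K$, $1/L$) cleanly justifies proving only one implication, a point the paper leaves implicit.
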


\begin{proof}
Given $\epsilon>0$.  Let $\delta>0$ be an $\frac{\epsilon}{K}-$modulus the average shadowing property $\mathcal{F}$. Suppose  $\{y_{i}\}_{i\geq 0}$ is a $L\delta$-average pseudo orbit of $\mathcal{G}$. This means that there exists a natural number $N = N(\delta) > 0$ and \\ $\sigma=\{\lambda_{0},\lambda_{1},\lambda_{2},...\}$ in  $\Lambda^{\mathbb{Z}_{+}}$,
such that for all $n \geq N$, $$\frac{1}{n}\sum_{i=0}^{n-1}d^{'}(g_{\lambda_{i}}(y_{i}),y_{i+1})<L\delta.$$ Put $x_{i}=h^{-1}(y_{i})$, for all $i\geq 0$. Then $$d(f_{\lambda_{i}}(x_{i}),x_{i+1})=d(h^{-1} o g_{\lambda_{i}}(y_{i}),h^{-1}(y_{i+1}))<\frac{d^{'}( g_{\lambda_{i}}(y_{i}),y_{i+1})}{L},$$ for all $i\geq 0$. Thus $$\frac{1}{n}\sum_{i=0}^{n-1}d(f_{\lambda_{i}}(x_{i}),x_{i+1}))<\frac{L\delta}{L}=\delta,$$
for all $n \geq N$. Hence $\{x_{i}\}_{i\geq 0}$ is a $\delta-$average pseudo orbit for $\mathcal{F}$ and so there is an orbit $\{z_{i}\}_{i\geq 0}$ of  $\mathcal{F}$ such that $$\limsup_{n\rightarrow\infty}\frac{1}{n}\sum_{i=0}^{n-1}d(z_{i},x_{i})<\frac{\epsilon}{K}.$$ Note that for each $i\geq 0$ there is $\mu_{i}\in \Lambda$ such that $z_{i+1}=f_{\mu_{i}}(z_{i})$. Let $w_{i}=h(z_{i})$, this is clear that \\ $w_{i+1}=h(z_{i+1})=h(f_{\mu_{i}}(z_{i}))=g_{\mu_{i}}(h(z_{i}))=g_{\mu_{i}}(w_{i})$ for all $i\geq 0$. Therefor $\{w_{i}\}_{i\geq 0}$ is an orbit of $\mathcal{G}$ and $d^{'}(w_{i},y_{i})=d^{'}(h(z_{i}),h(x_{i}))<K d(z_{i},x_{i})$ for all $i\geq 0$.  Then $$\limsup_{n\rightarrow\infty}\frac{1}{n}\sum_{i=0}^{n-1}d^{'}(w_{i},y_{i})=\limsup_{n\rightarrow\infty}\frac{1}{n}\sum_{i=0}^{n-1}d^{'}(h(z_{i}),h(x_{i}))<K\frac{\epsilon}{K}=\epsilon.$$
\end{proof}
\begin{theorem}\label{tc}
If a parameterized IFS $\mathcal{F}=\{X; f_{\lambda}|\lambda\in\Lambda\}$ is uniformly contracting, then it has the average shadowing property on $\mathbb{Z}_{+}$.
\end{theorem}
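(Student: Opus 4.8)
The plan is to use the initial point $z=x_{0}$ of the pseudo-orbit itself as the shadowing point, together with the very same symbol sequence $\sigma$ that witnesses the average-pseudo-orbit condition, and to exploit the fact that in a uniform contraction the errors committed along the orbit are damped geometrically. Let $\beta<1$ denote the contracting ratio, so that $d(f_{\lambda}(x),f_{\lambda}(y))\leq\beta\, d(x,y)$ for every $\lambda\in\Lambda$ and all $x,y\in X$.

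First I would fix $\epsilon>0$ and set $\delta=\tfrac{1}{2}(1-\beta)\epsilon$. Given a $\delta$-average-pseudo-orbit $\{x_{i}\}_{i\geq 0}$, let $\sigma=\{\lambda_{0},\lambda_{1},\dots\}$ and $N=N(\delta)$ be as in the definition, so that $\frac{1}{n}\sum_{i=0}^{n-1}e_{i}<\delta$ for all $n\geq N$, where $e_{i}:=d(f_{\lambda_{i}}(x_{i}),x_{i+1})$. Put $z=x_{0}$ and let $y_{i}:=\mathcal{F}_{\sigma_{i}}(z)$ be the genuine orbit generated along $\sigma$, so that $y_{0}=x_{0}$ and $y_{i+1}=f_{\lambda_{i}}(y_{i})$.

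The key step is to control $a_{i}:=d(y_{i},x_{i})$. Using the triangle inequality together with uniform contraction,
$$a_{i+1}=d(f_{\lambda_{i}}(y_{i}),x_{i+1})\leq d(f_{\lambda_{i}}(y_{i}),f_{\lambda_{i}}(x_{i}))+e_{i}\leq \beta\, a_{i}+e_{i},$$
with $a_{0}=0$. Unwinding this recursion gives $a_{i}\leq\sum_{j=0}^{i-1}\beta^{\,i-1-j}e_{j}$. Then I would sum over $i$ and interchange the order of summation, bounding each inner geometric sum $\sum_{i=j+1}^{n-1}\beta^{\,i-1-j}$ uniformly in $n$ by $\tfrac{1}{1-\beta}$, to obtain
$$\sum_{i=0}^{n-1}a_{i}\leq\sum_{j=0}^{n-1}e_{j}\Bigl(\sum_{k=0}^{\infty}\beta^{k}\Bigr)=\frac{1}{1-\beta}\sum_{j=0}^{n-1}e_{j}.$$
Dividing by $n$ and passing to the limit superior yields
$$\limsup_{n\rightarrow\infty}\frac{1}{n}\sum_{i=0}^{n-1}d(\mathcal{F}_{\sigma_{i}}(z),x_{i})\leq\frac{\delta}{1-\beta}=\frac{\epsilon}{2}<\epsilon,$$
which is exactly the strict inequality required, with $z=x_{0}$ and the chosen $\sigma$.

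The main obstacle is the interchange-of-summation estimate: one must check that the inner sum over $i$ is bounded uniformly in $n$ by $\tfrac{1}{1-\beta}$, since it is precisely this uniform geometric bound that lets the Cesàro average of the step errors $e_{i}$ dominate the Cesàro average of the shadowing discrepancies $a_{i}$. Everything else — the existence of the ratio $\beta$, the triangle-inequality recursion, and the final passage to the limit — is routine.
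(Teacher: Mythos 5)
Your proposal is correct and follows essentially the same route as the paper: the same choice $\delta=\tfrac{1}{2}(1-\beta)\epsilon$, the same triangle-inequality recursion $a_{i+1}\leq\beta a_{i}+e_{i}$, and the same geometric-series bound on the Ces\`aro averages (the paper merely allows any starting point with $d(x_{0},y_{0})<\delta$ where you take $y_{0}=x_{0}$ exactly). No gaps.
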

\begin{proof}
Assume that $\beta<1$ is the contracting ratio of $\mathcal{F}$. Given $\epsilon>0$, take $\delta=\frac{(1-\beta)\epsilon}{2}\leq\frac{\epsilon}{2}$ and suppose $\{x_{i}\}_{i\geq 0}$ is a $\delta-$pseudo orbit for $ \mathcal{F}$. So there there exists a natural number $N = N(\delta) $ and \\ $\sigma=\{\lambda_{0},\lambda_{1},\lambda_{2},...\}\in\Lambda^{\mathbb{Z}_{+}}$ such that $\frac{1}{n}\sum_{i=0}^{n-1}d(f_{\lambda_{i}}(x_{i}),x_{i+1})<\delta$ for all $n\geq N(\delta)$. Put $\alpha_{i}=d(f_{\lambda_{i}}(x_{i}),x_{i+1})$, for all $i\geq 0$. Consider an orbit $\{y_{i}\}_{i\geq 0}$ such that $d(x_{0},y_{0})<\delta\leq\frac{\epsilon}{2}$ and $y_{i+1}=f_{\lambda_{i}}(y_{i})$, for all $i\geq 0$.\\Now we show that $\limsup_{n\rightarrow\infty}\frac{1}{n}\sum_{i=0}^{n-1}d(y_{i},x_{i})<\epsilon$.\\
Take $M=d(x_{0},y_{0})$. Obviously, $$d(x_{1},y_{1})\leq d(x_{1},f_{\lambda_{0}}(x_{0}))+d(f_{\lambda_{0}}(x_{0}),f_{\lambda_{0}}(y_{0}))\leq \alpha_{0}+\beta M.$$
Similarly \begin{eqnarray*}d(x_{2},y_{2})&\leq& d(x_{2},f_{\lambda_{1}}(x_{1}))+d(f_{\lambda_{1}}(x_{1}),f_{\lambda_{1}}(y_{1}))\\&\leq& \alpha_{1}+\beta d(x_{1},y_{1})\\&\leq& \alpha_{1}+\beta(\alpha_{0}+\beta M).\end{eqnarray*} And
\begin{eqnarray*}d(x_{3},y_{3})&\leq& d(x_{3},f_{\lambda_{2}}(x_{2}))+d(f_{\lambda_{2}}(x_{2}),f_{\lambda_{2}}(y_{2}))\\ &\leq& \alpha_{2}+\beta d(x_{2},y_{2})\\ &\leq& \alpha_{2}+\beta(\alpha_{1}+\beta d(x_{1},y_{1}))\\ &\leq& \alpha_{2}+\beta(\alpha_{1}+\beta (\alpha_{0}+\beta M)\\&=&\alpha_{2}+\beta\alpha_{1}+\beta^{2} \alpha_{0}+\beta^{3} M.\end{eqnarray*}
By induction, one can prove that for each $i>2$
$$d(x_{i},y_{i})\leq \alpha_{i-1}+\beta\alpha_{i-2}+...+\beta^{i-1} \alpha_{0}+\beta^{i} M.$$
This implies that \begin{eqnarray*}\sum_{i=0}^{n-1}d(y_{i},x_{i})
&=&M(1+\beta+..+\beta^{n-1})\\& +&\alpha_{0}(1+\beta+..+\beta^{n-2})\\ &+&\alpha_{1}(1+\beta+..+\beta^{n-3})\\& \vdots&\\&+&\alpha_{n-2}\\&\leq&
\frac{1}{1-\beta}(M+\sum_{i=0}^{n-2}\alpha_{i}).\end{eqnarray*}
Therefore \begin{eqnarray*}\limsup_{n\rightarrow\infty}\frac{1}{n}\sum_{i=0}^{n-1}d(y_{i},x_{i})&\leq& \frac{1}{1-\beta}(M+\limsup_{n\rightarrow\infty}\frac{1}{n}\sum_{i=0}^{n-2}\alpha_{i})\\&<& \frac{1}{1-\beta}(M+\delta) \\ &\leq&\frac{\epsilon}{2}+ \frac{\epsilon}{2}=\epsilon.\end{eqnarray*}
So the proof is complete.
\end{proof}
Niu \cite{[Nb]} showed that, for a dynamical system $(X,f)$, If $f$ has the average-shadowing property, then so does $f^{k}$ for every $k\in \mathbb{N}$. The following theorem generalize a similar result for IFS.
\begin{theorem}\label{tt}
Let $\Lambda$ be a finite set, $ \mathcal{F}=\{X; f_{\lambda}|\lambda\in\Lambda\}$ is an $IFS$ and let $k\geq 2$ be an integer. \\Set $\mathcal{F}^{k}= \{X; g_{\mu}|\mu\in\Pi\}=\{X; f_{\lambda_{k-1}}o ...of_{\lambda_{0}}|\lambda_{0},...,\lambda_{k-1}\in\Lambda\}$. \\
 If $\mathcal{F}$ has the average shadowing property (on $\mathbb{Z}_{+}$) then so does $ \mathcal{F}^{k}$.
\end{theorem}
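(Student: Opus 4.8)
The plan is to lift a $\delta$-average-pseudo-orbit of $\mathcal{F}^{k}$ to an average-pseudo-orbit of $\mathcal{F}$ by inserting the intermediate points of each $k$-fold composition, invoke the average shadowing property of $\mathcal{F}$, and then project the resulting shadowing orbit back down by regrouping its parameter sequence into blocks of length $k$. Fix $\epsilon>0$. Since $\mathcal{F}$ has the average shadowing property, choose $\delta_{1}>0$ so that every $\delta_{1}$-average-pseudo-orbit of $\mathcal{F}$ is $(\epsilon/k)$-shadowed in average, and set $\delta=\delta_{1}$. Let $\{x_{i}\}_{i\ge0}$ be a $\delta$-average-pseudo-orbit of $\mathcal{F}^{k}$ with associated sequence $\sigma=\{\mu_{0},\mu_{1},\dots\}\in\Pi^{\mathbb{Z}_{+}}$, where $\mu_{i}=(\lambda^{i}_{0},\dots,\lambda^{i}_{k-1})$ and $g_{\mu_{i}}=f_{\lambda^{i}_{k-1}}\circ\cdots\circ f_{\lambda^{i}_{0}}$. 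I would define $y_{ki}=x_{i}$ and $y_{ki+j}=f_{\lambda^{i}_{j-1}}\circ\cdots\circ f_{\lambda^{i}_{0}}(x_{i})$ for $1\le j\le k-1$, together with the parameter sequence $\tau=\{\nu_{m}\}_{m\ge0}$ given by $\nu_{ki+j}=\lambda^{i}_{j}$.

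The first step is to verify that $\{y_{m}\}_{m\ge0}$ is a $\delta_{1}$-average-pseudo-orbit of $\mathcal{F}$ along $\tau$. The key point is that the transitions inside each block are \emph{exact}: for $0\le j\le k-2$ one has $f_{\nu_{ki+j}}(y_{ki+j})=y_{ki+j+1}$, so $d(f_{\nu_{m}}(y_{m}),y_{m+1})=0$ unless $m\equiv k-1\pmod{k}$, in which case $m=ki+k-1$ and the term equals $d(g_{\mu_{i}}(x_{i}),x_{i+1})$. Consequently, for $M=kn$ the sum collapses to $\sum_{i=0}^{n-1}d(g_{\mu_{i}}(x_{i}),x_{i+1})$, whence $\frac1{kn}\sum_{m=0}^{kn-1}d(f_{\nu_{m}}(y_{m}),y_{m+1})=\frac1k\cdot\frac1n\sum_{i=0}^{n-1}d(g_{\mu_{i}}(x_{i}),x_{i+1})<\delta/k$ for $n\ge N$; windows of length not divisible by $k$ are handled the same way using nonnegativity of the summands. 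Since $\delta/k=\delta_{1}/k\le\delta_{1}$, the sequence $\{y_{m}\}$ is a $\delta_{1}$-average-pseudo-orbit of $\mathcal{F}$.

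Applying the average shadowing property of $\mathcal{F}$ to $\{y_{m}\}$ now furnishes a point $z\in X$ and a sequence $\rho=\{\eta_{0},\eta_{1},\dots\}\in\Lambda^{\mathbb{Z}_{+}}$ with $\limsup_{M\to\infty}\frac1M\sum_{m=0}^{M-1}d(\mathcal{F}_{\rho_{m}}(z),y_{m})<\epsilon/k$. Regrouping $\rho$ into consecutive blocks $\mu'_{j}=(\eta_{kj},\dots,\eta_{kj+k-1})$ yields a sequence $\sigma'\in\Pi^{\mathbb{Z}_{+}}$ with $\mathcal{G}_{\sigma'_{i}}(z)=\mathcal{F}_{\rho_{ki}}(z)$, while $x_{i}=y_{ki}$ by construction. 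Hence the quantity governing the shadowing of $\{x_{i}\}$ by $z$ under $\mathcal{F}^{k}$ is the subsampled average $\frac1n\sum_{i=0}^{n-1}d(\mathcal{F}_{\rho_{ki}}(z),y_{ki})$, taken only over indices that are multiples of $k$.

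I expect the main obstacle to be exactly this passage from the full Cesàro average to the one restricted to multiples of $k$, since subsampling a sequence need not preserve, and may even inflate, a Cesàro bound. The observation that closes the gap is that all summands are nonnegative, so $\frac1n\sum_{i=0}^{n-1}d(\mathcal{F}_{\rho_{ki}}(z),y_{ki})\le\frac1n\sum_{m=0}^{kn-1}d(\mathcal{F}_{\rho_{m}}(z),y_{m})=k\cdot\frac1{kn}\sum_{m=0}^{kn-1}d(\mathcal{F}_{\rho_{m}}(z),y_{m})$; that is, the subsampled average is at most $k$ times the full average. Passing to $\limsup$ bounds it by $k\cdot(\epsilon/k)=\epsilon$, which is precisely why the accuracy $\epsilon/k$ was demanded of $\mathcal{F}$ in the first step. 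Therefore $z$ $\epsilon$-shadows $\{x_{i}\}$ in average with respect to $\mathcal{F}^{k}$ along $\sigma'$, and since $\epsilon>0$ was arbitrary, $\mathcal{F}^{k}$ has the average shadowing property.
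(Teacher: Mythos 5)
Your proposal is correct and follows essentially the same route as the paper's proof: lift the pseudo-orbit of $\mathcal{F}^{k}$ by inserting the exact intermediate points, shadow it with accuracy $\epsilon/k$ under $\mathcal{F}$, and then bound the subsampled average over multiples of $k$ by $k$ times the full Ces\`aro average. You merely supply two details the paper leaves implicit (the verification that the lifted sequence is an average pseudo-orbit and the regrouping of the shadowing parameter sequence into blocks of length $k$), so there is nothing substantively different to compare.
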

\begin{proof} Suppose $\mathcal{F}$ has the average shadowing property and $\epsilon$ is an arbitrary positive number. There exist $\delta>0$ such that every $\delta-$average pseudo-orbit is $\frac{\epsilon}{k}$-shadowed in average by some point in $X$. Suppose  $\{x_{i}\}_{i\geq 0}$ is a $\delta-$average pseudo orbit for $\mathcal{F}^{k}$. So there exists a natural number $N = N(\delta) > 0$ and  $\gamma=\{\mu_{0}, \mu_{1},\mu_{2},\mu_{3},...\}$ in  $\Pi^{\mathbb{Z}_{+}}$,
such that
$$\frac{1}{n}\sum_{i=0}^{n-1}d(g_{\mu_{i}}(x_{i}),x_{i+1})<\delta.$$for all $n \geq N$. Now we put $y_{nk+j}=f_{\lambda^{n}_{j}}o ...of_{\lambda^{n}_{0}}(x_{n})$ and
$y_{nk}=x_{n}$.
Which  $g_{\mu_{n}}=f_{\lambda^{n}_{k-1}}o ...of_{\lambda^{n}_{0}}$,
for $0<j<k-1$, $n\in\mathbb{Z}_{+}$, that is $$\{y_{i}\}_{i\geq0}=\{x_{0},f_{\lambda^{0}_{0}}(x_{0}),...,f_{\lambda^{0}_{k-2}}o ...of_{\lambda^{0}_{0}}(x_{0}),x_{1},f_{\lambda^{1}_{0}}(x_{1}),...,f_{\lambda^{1}_{k-2}}o ...of_{\lambda^{1}_{0}}(x_{1}),....\}.$$
 This is clear that $\{y_{i}\}_{i\geq0}$ is a $\delta-$average pseudo-orbit for $\mathcal{F}$. So there exists  $x\in X$ and $\sigma\in \Lambda^{\mathbb{Z}_{+}}$  satisfies, $$\limsup_{n\rightarrow\infty}\frac{1}{n}\sum_{i=0}^{n-1}d(\mathcal{F}_{\sigma_{i}}(x),y_{i})<\frac{\epsilon}{k}.$$ This implies that $$
 \limsup_{n\rightarrow\infty}\frac{1}{kn}\sum_{i=0}^{n-1}d(\mathcal{F}_{\sigma_{ki}}(x),y_{ki})\leq
 \limsup_{n\rightarrow\infty}\frac{1}{kn}\sum_{j=0}^{kn-1}d(\mathcal{F}_{\sigma_{j}}(x),y_{j})<\frac{\epsilon}{k}.$$
 So $$\limsup_{n\rightarrow\infty}\frac{1}{n}\sum_{i=0}^{n-1}d(\mathcal{F}_{\sigma_{ki}}(x),y_{ki})<\epsilon.$$
 Since $y_{ik}=x_{i}$, the theorem is proved.
 \end{proof}
Let $(X,d)$ and $(Y,d^{'}) $ are two complete metric spaces. Consider the product set $X\times Y$ endowed with the metric\\ $D((x_{1},y_{1}),(x_{2},y_{2}))=max\{d(x_{1},x_{2}), d^{'}(y_{1},y_{2})\}$.\\
 Let $ \mathcal{F}=\{X; f_{\lambda}|\lambda\in\Lambda\}$ and $ \mathcal{G}=\{Y; g_{\gamma}|\gamma\in\Gamma\}$ are two parameterized IFS. The IFS $\mathcal{H}=\mathcal{F}\times\mathcal{G}=\{X\times Y; f_{\lambda}\times g_{\gamma}:\lambda\in\Lambda,\gamma\in\Gamma\}$, defined by $( f_{\lambda}\times g_{\gamma})(x,y)=( f_{\lambda}(x), g_{\gamma}(y))$ is called the \emph{product} of the IFS $\mathcal{F}$ and $\mathcal{G}$.
  \begin{theorem}\label{tp}
The product of two parameterized IFS has the average shadowing property if and only if each projection has.
  \end{theorem}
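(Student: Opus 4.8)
The plan is to prove the two implications separately, using that the product metric $D=\max\{d,d'\}$ dominates each coordinate metric, $d\le D$ and $d'\le D$, while being dominated by their sum, $D\le d+d'$. I will also use repeatedly that iteration in the product respects coordinates: for $(z,w)\in X\times Y$ and a symbol sequence $\rho=\{(\sigma_i,\tau_i)\}_{i\geq 0}$ in $(\Lambda\times\Gamma)^{\mathbb{Z}_{+}}$ one has $\mathcal{H}_{\rho_i}(z,w)=(\mathcal{F}_{\sigma_i}(z),\mathcal{G}_{\tau_i}(w))$, since a composition of product maps is the product of the coordinate compositions.

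For the forward implication (if $\mathcal{H}$ has the average shadowing property then so does each projection) I would establish it for $\mathcal{F}$, the case of $\mathcal{G}$ being symmetric. Given $\epsilon>0$, let $\delta$ be the modulus provided by the average shadowing property of $\mathcal{H}$. The crucial step is to lift an arbitrary $\delta$-average-pseudo-orbit $\{x_i\}$ of $\mathcal{F}$, with symbols $\{\lambda_i\}$, to one of $\mathcal{H}$ without enlarging the error. To do this I fix any $w_0\in Y$ and any $\eta=\{\eta_i\}\in\Gamma^{\mathbb{Z}_{+}}$ and set $w_{i+1}=g_{\eta_i}(w_i)$, so that $\{w_i\}$ is a genuine orbit of $\mathcal{G}$ and $d'(g_{\eta_i}(w_i),w_{i+1})=0$ for every $i$. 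Then, with the symbols $\{(\lambda_i,\eta_i)\}$,
$$D\bigl((f_{\lambda_i}\times g_{\eta_i})(x_i,w_i),(x_{i+1},w_{i+1})\bigr)=\max\{d(f_{\lambda_i}(x_i),x_{i+1}),0\}=d(f_{\lambda_i}(x_i),x_{i+1}),$$
so $\{(x_i,w_i)\}$ is a $\delta$-average-pseudo-orbit of $\mathcal{H}$. Applying the average shadowing property of $\mathcal{H}$ yields a point $(z,u)$ and a product sequence $\rho$ whose first coordinate is some $\sigma$ with $\limsup_{n}\frac{1}{n}\sum_{i=0}^{n-1}D(\mathcal{H}_{\rho_i}(z,u),(x_i,w_i))<\epsilon$; since $d(\mathcal{F}_{\sigma_i}(z),x_i)\le D(\mathcal{H}_{\rho_i}(z,u),(x_i,w_i))$, the point $z$ shadows $\{x_i\}$ in average to within $\epsilon$, so $\mathcal{F}$ has the average shadowing property.

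For the reverse implication I assume both factors have the average shadowing property. Given $\epsilon>0$, I would choose $\delta_1,\delta_2>0$ so that every $\delta_1$-average-pseudo-orbit of $\mathcal{F}$ is $\frac{\epsilon}{2}$-shadowed in average and every $\delta_2$-average-pseudo-orbit of $\mathcal{G}$ is $\frac{\epsilon}{2}$-shadowed in average, and set $\delta=\min\{\delta_1,\delta_2\}$. For a $\delta$-average-pseudo-orbit $\{(x_i,y_i)\}$ of $\mathcal{H}$ with symbols $\{(\lambda_i,\gamma_i)\}$, the bounds $d(f_{\lambda_i}(x_i),x_{i+1})\le D(\cdots)$ and $d'(g_{\gamma_i}(y_i),y_{i+1})\le D(\cdots)$ show that $\{x_i\}$ and $\{y_i\}$ are $\delta_1$- and $\delta_2$-average-pseudo-orbits of the factors, respectively. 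Let $z$ (with sequence $\sigma$) and $w$ (with sequence $\tau$) be the corresponding average shadowing points, and take $(z,w)$ together with the combined sequence $\rho=\{(\sigma_i,\tau_i)\}$. Then $D\le d+d'$ and subadditivity of $\limsup$ give
$$\limsup_{n\to\infty}\frac{1}{n}\sum_{i=0}^{n-1}D(\mathcal{H}_{\rho_i}(z,w),(x_i,y_i))\le\frac{\epsilon}{2}+\frac{\epsilon}{2}=\epsilon,$$
so $\mathcal{H}$ has the average shadowing property.

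The inequalities involving $D$ are routine; the one genuinely delicate point is the forward direction, where a pseudo-orbit that lives only in $X$ must be promoted to a pseudo-orbit in $X\times Y$ with the same modulus $\delta$. Padding with a true orbit in the second coordinate---which contributes exactly $0$ to the maximum---is precisely what achieves this. A secondary point to keep in mind is that the sequence returned by the shadowing of $\mathcal{H}$ need not be related to the symbols of the lifted pseudo-orbit, but this is harmless because the definition of average shadowing for an IFS already allows the shadowing orbit to carry its own symbol sequence.
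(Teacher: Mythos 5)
Your proof is correct and follows essentially the same route as the paper: for the ``product $\Rightarrow$ factor'' direction the paper likewise pads the $\mathcal{F}$-pseudo-orbit with a genuine $\mathcal{G}$-orbit in the second coordinate and projects the shadowing point, and for the converse it projects the product pseudo-orbit, shadows each factor, and recombines. The only divergence is cosmetic: you combine the two shadowing estimates via $\max\{a,b\}\le a+b$ and subadditivity of $\limsup$ with tolerance $\epsilon/2$ in each factor, whereas the paper uses the tolerance $\alpha=\bigl(\tfrac{\epsilon}{2(M+1)}\bigr)^{2}$ and cites Lemma 3.4 of Niu to bound the averaged maximum; your elementary bound is self-contained and suffices.
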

  \begin{proof}
  Given $\epsilon>0$. Let $M$ be the diameter of $X\times Y$ and $\alpha=(\frac{\epsilon}{2(M+1)})^{2}$. Since $ \mathcal{F}$ and $ \mathcal{G}$ have the average shadowing property, then there exist $\delta>0$ such that every $\delta$-average pseudo orbit for $ \mathcal{F}(\mathcal{G})$ is $\alpha-$ shadowed by some point $u\in X(v\in Y).$\\
  Suppose $\{(x_{i},y_{i})\}_{i\geq0}$ is a $\delta$-average pseudo orbit for $\mathcal{F}\times\mathcal{G}$. By definition of the $\delta$-average pseudo orbit, $\{x_{i}\}_{i\geq0}$ and $\{y_{i}\}_{i\geq0}$ are $\delta$-average pseudo orbits for $\mathcal{F}$ and $\mathcal{G}$ respectively. Then there exists the sequences $\sigma\in \Lambda^{\mathbb{Z}_{+}}$, $\gamma\in \Gamma^{\mathbb{Z}_{+}}$ and points $u\in X$, $v\in Y$
  such that\\ $\limsup_{n\rightarrow\infty}\frac{1}{n}\sum_{i=0}^{n-1}d(\mathcal{F}_{\sigma_{i}}(u),x_{i})<\alpha$ and $$\limsup_{n\rightarrow\infty}\frac{1}{n}\sum_{i=0}^{n-1}d^{'}(\mathcal{G}_{\gamma_{i}}(v),y_{i})<\alpha.$$ Now, Lemma 3.4 of \cite{[Nb]} implies that  $$\limsup_{n\rightarrow\infty}\frac{1}{n}\sum_{i=0}^{n-1}D(\mathcal{F}_{\sigma_{i}}\times\mathcal{G}_{\gamma_{i}}(u,v),(x_{i},y_{i}))$$$$~~~~~~~~= \limsup_{n\rightarrow\infty}\frac{1}{n}\sum_{i=0}^{n-1}max(d(\mathcal{F}_{\sigma_{i}}(u),x_{i}),d^{'}(\mathcal{G}_{\gamma_{i}}(v),y_{i}))<\epsilon.$$
  So $\mathcal{F}\times\mathcal{G}$ has the average shadowing property.\\
  Conversely; suppose that $\mathcal{F}\times\mathcal{G}$ has the average shadowing property and $\epsilon$ is an arbitrary positive number. There exist $\delta>0$ such that every $\delta-$average pseudo orbit of $\mathcal{F}\times\mathcal{G}$ can be $\epsilon-$shadwed by some points in $X$. Let $\{x_{i}\}_{i\geq0}$ be a $\delta-$average pseudo orbit for $\mathcal{F}$. Take an orbit $\{y_{i}\}_{i\geq0}$ in $\mathcal{G}$. So $\{(x_{i},y_{i})\}_{i\geq0}$ is a $\delta-$average pseudo orbit for $\mathcal{F}\times\mathcal{G}$. Then there exists the sequences $\sigma\in \Lambda^{\mathbb{Z}_{+}}$, $\gamma\in \Gamma^{\mathbb{Z}_{+}}$ and point $(u,v)\in X\times Y$ such that \\ $\limsup_{n\rightarrow\infty}\frac{1}{n}\sum_{i=0}^{n-1}D(\mathcal{F}_{\sigma_{i}}\times\mathcal{G}_{\gamma_{i}}(u,v),(x_{i},y_{i}))<\epsilon$. \\Therefore
  $\limsup_{n\rightarrow\infty}\frac{1}{n}\sum_{i=0}^{n-1}d(\mathcal{F}_{\sigma_{i}}(u), x_{i})<\epsilon$.\end{proof}
Consider a circle $S^{1}$ with coordinate $x \in [0; 1)$ and we denote by $d$
the metric on $S^{1}$ induced by the usual distance on the real line. Let $\pi(x) :\mathbb{ R} \rightarrow S^{1}$ be the covering projection defined by the relations
$$\pi(x) \in [0; 1) and~ \pi(x) = x(x mod 1)$$
with respect to the considered coordinates on $S^{1}$.
\begin{theorem}\label{t2}
Suppose $F_{1}$ and $F_{2}$ are two homeomorphism on $[0,1]$ and $f_{1}$, $f_{2}$ are their induced homeomorphisms on $S^{1}$. Assume that $0,a,1$ are fixed points of $F_{1}$ and $F_{2}$ for some $0<a<1$. Also $F_{1}(t)> t$ and \\$F_{2}(t)> t$ for all $t\in [0,1]-\{0,a,1\}$. Then  the \emph{IFS}, $\mathcal{F}=\{[0,1], f_{\lambda}|\lambda\in\{0,1\}\}$ where $\Lambda=\{1,2\}$, does not satisfy the average shadowing property.
\end{theorem}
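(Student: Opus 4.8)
The plan is to produce a single constant $\epsilon_0>0$ for which the average shadowing property must fail, by constructing, for each $\delta>0$, a $\delta$-average pseudo orbit that no genuine orbit can $\epsilon_0$-shadow in average. The whole argument rests on one structural fact about the induced dynamics on $S^{1}$: since each $F_\lambda$ is an increasing homeomorphism of $[0,1]$ fixing $0,a,1$ with $F_\lambda(t)>t$ off these points, each $f_\lambda$ maps the arc with coordinates in $[0,a]$ into itself and the arc with coordinates in $[a,1]$ into itself, pushing every interior point of the former toward $a$ and every interior point of the latter toward the identified point $0=1$. I would first isolate the key lemma: \emph{every orbit $\{y_i\}$ of $\mathcal{F}$ converges either to $a$ or to $0$}. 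Indeed, if $y_0\in(0,a)$ then $y_{i+1}=f_{\mu_i}(y_i)>y_i$ keeps the orbit inside $(0,a)$, so $\{y_i\}$ is increasing and bounded above by $a$, hence converges to some $L\le a$; were $L<a$, the quantity $\min_\lambda \min_{t\in[y_0,L]}(F_\lambda(t)-t)$ would be a positive constant $\rho$ on that compact subarc, forcing $y_{i+1}-y_i\ge\rho$ and contradicting boundedness, so $L=a$. The case $y_0\in(a,1)$ is identical with limit $0$, and fixed starting points are trivial.

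Second, given $\delta>0$ I would build the offending pseudo orbit by alternating long sojourns at the two fixed points. Put $\ell=\min\{a,1-a\}=d(0,a)$, fix an integer $T$ with $\frac{\ell}{T}<\delta$, and let $\{x_i\}$ take the value $0$ for $T$ consecutive indices, then $a$ for $T$ indices, then $0$ again, and so on. Because $0$ and $a$ are fixed by every $f_\lambda$, the error $d(f_{\lambda_i}(x_i),x_{i+1})$ vanishes except at the "switch" indices, where it equals $d(0,a)=\ell$; as switches occur only once every $T$ steps, the running averages $\frac{1}{n}\sum_{i=0}^{n-1}d(f_{\lambda_i}(x_i),x_{i+1})$ converge to $\frac{\ell}{T}<\delta$ and so are eventually below $\delta$, whence $\{x_i\}$ is a genuine $\delta$-average pseudo orbit. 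Its empirical distribution is what matters: the indices carrying value $0$ and those carrying value $a$ each have asymptotic density $\frac12$.

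Finally I would bound the shadowing average from below. Let $\{y_i\}$ be any orbit; by the lemma it converges to $a$ or to $0$. If $y_i\to a$, then for every $\eta>0$ and all large $i$ we have $d(y_i,0)>\ell-\eta$, so restricting $\frac{1}{n}\sum_{i=0}^{n-1}d(y_i,x_i)$ to the density-$\frac12$ set of indices with $x_i=0$ gives $\limsup_{n\to\infty}\frac{1}{n}\sum_{i=0}^{n-1}d(y_i,x_i)\ge\frac12(\ell-\eta)$, hence $\ge\frac{\ell}{2}$; the case $y_i\to0$ is symmetric using the indices with $x_i=a$. Thus with $\epsilon_0=\frac{\ell}{2}$ no orbit $\epsilon_0$-shadows $\{x_i\}$ in average, and since $\delta>0$ was arbitrary the average shadowing property fails. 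I expect the only genuine obstacle to be the key lemma: one must upgrade the single-map attraction toward $a$ (respectively $0$) to \emph{arbitrary} compositions of $f_1$ and $f_2$, which is exactly where the uniform positive gap $\rho$ on compact subarcs is needed; everything afterward is routine bookkeeping with densities and the $\limsup$.
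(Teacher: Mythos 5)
Your proposal is correct and follows essentially the same strategy as the paper: a $\delta$-average pseudo-orbit alternating between the two fixed points $\pi(0)$ and $\pi(a)$, combined with the key lemma that every orbit of $\mathcal{F}$ converges to one of these points, which forces the shadowing average to stay above $\tfrac{1}{2}\min\{a,1-a\}$. The only differences are cosmetic: you use constant block length $T$ where the paper uses blocks of doubling length (both give the needed density bounds), and you spell out the compactness argument for the convergence lemma that the paper's proof of its Claim merely asserts.
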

\begin{proof}
Assume that $\pi(0)=\pi(1)=b$ and $\pi(a)=c$. So $b,~c$ are two fixed point of $f_{1}$ and $f_{2}$. Take $\epsilon>0$ such that $min\{d(b,c), d(c,b)\}>3\epsilon$ and\\ $D=max_{(x,y)\in S^{1}\times S^{1}}d(x,y)$. Given $\delta>0$. Choose a natural number $K$ such that $\frac{3D}{K}<\delta.$ \\We define  $\{x_{i}\}_{i\geq 0}$ by setting\\
 $~~~~~x_{i} =\left\lbrace
 \begin{array}{c l}
  b& \text{~if~ $0\leq i\leq K$}\\
  c& \text{~if ~$K+1\leq i\leq 3K$}\\
  b & \text{~if~ $3.2^{j}.K+1\leq i\leq 3.2^{j+1}.K,~j=0,2,4,...$}\\
  c & \text{~if ~$3.2^{j}.K+1\leq i\leq 3.2^{j+1}.K,~j=1,3,5,...$} \end{array}
\right. $
\par Obviously for $n>K$,$$\frac{1}{n}\sum_{i=0}^{n-1}d(f_{1}(x_{i}),x_{i+1})<\frac{1}{n}.\frac{n}{K}.3D<\delta.$$
Then $\{x_{i}\}_{i\geq 0}$ is a $\delta-$average-pseudo orbit of $f_{1}$ and consequently a $\delta-$average-pseudo orbit of $\mathcal{F}$.\\
We assume $z$ to be a point in $S^{1}$ such that  $\{x_{i}\}_{i\geq 0}$ is $\epsilon-$shadowed in average, respect to $\mathcal{F}$,  by $z$. That is
$$\limsup_{n\rightarrow\infty}\frac{1}{n}\sum_{i=0}^{n-1}d(\mathcal{F}_{\sigma_{i}}(z),x_{i})<\epsilon$$
for some $\sigma\in\Lambda^{\mathbb{Z}_{+}}$.\\
\textbf{Claim:} \emph{There is a natural number $M$ and a point $e\in \{b,c\}$  such that $d(\mathcal{F}_{\sigma_{i}}(z),e)\geq\epsilon$ for all $i>M$.}\\
Therefore
 $$\limsup_{n\rightarrow\infty}\frac{1}{n}\sum_{i=0}^{n-1}d(\mathcal{F}_{\sigma_{i}}(z),x_{i})\geq\epsilon,$$
 that is a contradiction. So for any $\delta>0$ we can find a $\delta-$average-pseudo orbit of $\mathcal{F}$ that can not be $\epsilon-$shadowed in average, respect to $\mathcal{F}$,  by some point in $S^{1}$.
\end{proof}
\textbf{Proof of Claim:}\\
The case $z=b$ or $z=c$ is clear. Let $w=\pi^{-1}(z)$ and $\sigma=\{\lambda_{0},\lambda_{1},\lambda_{2},.... \}$. Suppose $0<w<a$.
Since $F_{1}(t)> t$ and $F_{2}(t)> t$ for all
 $t\in [0,1]-\{0,a,1\}$ then $lim_{n\rightarrow\infty}F_{\lambda_{n}}o F_{\lambda_{n-1}}o...o F_{\lambda_{0}}(w)=a$.
 Thus $lim_{n\rightarrow\infty}\mathcal{F}_{\sigma_{i}}(z)=c$.\\ Similarly if $a<w<1$ then  $lim_{n\rightarrow\infty}\mathcal{F}_{\sigma_{i}}(z)=b$.
We recall that, a point $x\in X$ is a \emph{chain recurrent} for $\mathcal{F}$ if for every $\epsilon>0$ there exists a finite sequences of points $\{p_{i}\in X :i=0,1,...,n\}$ with $p_{0}=p_{n}=x$, and a corresponding sequence of indices $\{\lambda_{i}\in\Lambda:i=1,2,...,n\}$ satisfying $d(f_{\lambda_{i}}(p_{i}),p_{i+1})\leq \epsilon$ for $i=1,2,...,n-1$. Such a sequence of points is called an $\epsilon-$chain from $x$ to $x$, similarly we can define an $\epsilon-$chain from $x$ to $y$
\cite[Theorem 3.1.]{[BWL]}.
\begin{remark}
Let $ \mathcal{F}=\{X; f_{\lambda}|\lambda\in\Lambda\}$ and $\lambda\in\Lambda$. Every chain recurrent of $ f_{\lambda}$ is a chain recurrent of $\mathcal{F}$ but by Example \ref{e8} conversely is not true.
\end{remark}
\begin{example}\label{e8}
Let $\pi:[0,1]\rightarrow S^{1}$ be a map defined by \\$\pi(t)=(\cos(2\pi t),\sin(2\pi t))$
 Let $F_{1}:[0,1]\rightarrow[0,1]$ be a homeomorphism defined by \\
 $F_{1}(t) =\left\lbrace
 \begin{array}{c l}
  t+(\frac{1}{2}-t)t& \text{if $0\leq t\leq \frac{1}{2}$}\\
  t-(t-\frac{1}{2})(1-t)& \text{if $\frac{1}{2}\leq t\leq 1$} \end{array}
\right. $ \\
And
 $F_{2}:[0,1]\rightarrow[0,1]$ be a homeomorphism defined by \\
 $F_{2}(t) =\left\lbrace
 \begin{array}{c l}
 t+(\frac{1}{2}-t)t& \text{if $0\leq t\leq \frac{1}{2}$}\\
  t+(1-t)(t-\frac{1}{2})& \text{if $\frac{1}{2}\leq t\leq 1$} \end{array}
\right. $\\
Assume that $f_{i}$ is homeomorphisms on $S^{1}$ defined by\\ $f_{i}(\cos(2\pi t),\sin(2\pi t))=(\cos(2\pi F_{i}(t)),\sin(2\pi F_{i}(t)))$, for $i\in\{0,1\}$. \\Consider  $\mathcal{F}=\{S^{1}, f_{\lambda}|\lambda\in\{0,1\}\}$, this is clear that  $x=\pi(\frac{1}{2})$ is not a chain recurrent point for $f_{1}$.
 We show that  $x$  is a chain recurrent for $\mathcal{F}.$  Given $\epsilon>0$, there exist $\delta>0$ such that $|s-t|<\delta$ implies $d(\pi(s),\pi(t))<\epsilon.$ By definition of $F_{1}$ this is clear that if $0<t<a$ then $\{F_{1}^{n}(t)\}_{n\geq 0}$ is an increasing sequence that converges to $\frac{1}{2}$. Similarly if $\frac{1}{2}<t<1$ then $\{F_{2}^{n}(t)\}_{n\geq 0}$ is an increasing sequence that converges to $1$.  There is a $\delta-$chian, respect to $F_{2}$, $\frac{1}{2}=y_{0},...y_{N}=1$ from $\frac{1}{2}$ to $1$ and a $\delta-$chain, respect to $F_{1}$, $0=y_{N+1},...,y_{N+K}=\frac{1}{2}$ from $0$ to $\frac{1}{2}$. Hence $x_{0},...,x_{N+K}$ is an $\epsilon-$chain from $x$ to $x$. Up to this point, $x$ is a chain recurrent for  $\mathcal{F}.$
\end{example}
The following theorem gives another characterization of the
average shadowing property on IFS.
\begin{theorem}\label{TT7}
If a continuous surjective \emph{IFS}, $\mathcal{F}$, on a compact metric space $X$ has the average shadowing property then every point $x$ is a chain recurrent for $\mathcal{F}$. Moreover $\mathcal{F}$ has only one chain component which is the whole space.
\end{theorem}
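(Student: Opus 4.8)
The plan is to establish the single stronger statement that $\mathcal{F}$ is chain transitive --- that for all $x,y\in X$ and all $\epsilon>0$ there is an $\epsilon$-chain from $x$ to $y$ --- from which both assertions follow at once: taking $y=x$ gives chain recurrence of every point, and chain transitivity is precisely the statement that $X$ is a single chain component. I would argue by contraposition: assuming $\mathcal{F}$ is \emph{not} chain transitive, I will produce, for some fixed $\epsilon>0$ and for arbitrarily small $\delta>0$, a $\delta$-average-pseudo-orbit that no point $\epsilon$-shadows in average, contradicting the average shadowing property.

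The engine of the argument is an attractor--repeller pair. Failure of chain transitivity yields, by the Conley decomposition for the closed relation on the compact space $X$ generated by the finite family $\{f_{\lambda}\}_{\lambda\in\Lambda}$, a proper open \emph{trapping region} $U$, namely $\emptyset\neq U\neq X$ with $\bigcup_{\lambda\in\Lambda}f_{\lambda}(\overline{U})\subseteq U$. Put $A=\bigcap_{n\geq 0}\mathcal{F}^{n}(\overline{U})$ (the union over all length-$n$ compositions applied to $\overline{U}$) and $C=X\setminus U$; then $A$ is a nonempty compact set with $f_{\lambda}(A)\subseteq A$ for every $\lambda$, and $\rho:=d(A,C)>0$. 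Surjectivity now enters decisively to supply an orbit that \emph{stays in} $C$. Setting $C_{0}=C$ and $C_{n+1}=\{w\in C:\ f_{\lambda}(w)\in C_{n}\ \text{for some}\ \lambda\in\Lambda\}$, each $C_{n}$ is closed, the sets are nested, and $C_{n}\neq\emptyset$ forces $C_{n+1}\neq\emptyset$: otherwise every point of $C$ would map into $U$ under every map, whence $\bigcup_{\lambda}f_{\lambda}(X)\subseteq U\subsetneq X$, contradicting surjectivity. By compactness --- the tree of finite orbits in $C$ issuing from a point of $\bigcap_{n}C_{n}$ is infinite and finitely branching --- there are a point $r$ and symbols $\{\mu_{k}\}$ with $r_{0}=r$, $r_{k+1}=f_{\mu_{k}}(r_{k})$, and $r_{k}\in C$ for all $k$. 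Fixing also any $p\in A$ (whose orbit stays in $A$ under any symbols), I form $\{a_{i}\}$ by concatenating blocks of strictly increasing but comparable lengths (say the $j$-th block of length $j$) that alternate a segment of the orbit of $p$ (lying in $A$) with a segment of the orbit of $r$ (lying in $C$). Inside each block the consecutive error is $0$, and there is a single error, of size at most $\operatorname{diam}X$, at each block boundary; since the boundaries have density tending to $0$, $\{a_{i}\}$ is a $\delta$-average-pseudo-orbit of $\mathcal{F}$ for every $\delta>0$.

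Finally I would exploit the shadowing orbit $w_{i}=\mathcal{F}_{\tau_{i}}(z)$, $w_{i+1}=f_{\tau_{i}}(w_{i})$, of a supposed average-shadowing point $z$. The decisive observation is that the trapping inclusion holds for \emph{all} maps, so it constrains $w_{i}$ regardless of the symbol sequence $\tau$: either $w_{i}\in U$ for some $i$, in which case $w_{j}\in U$ for all $j\geq i$ and $d(w_{j},A)\to 0$; or $w_{i}\in C$ for all $i$, in which case $d(w_{i},A)\geq\rho$ for all $i$. In the first case $d(w_{i},a_{i})\geq\rho/2$ for all large $i$ in the $r$-blocks (there $a_{i}\in C$); in the second case $d(w_{i},a_{i})\geq\rho$ for all $i$ in the $p$-blocks (there $a_{i}\in A$). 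Since each family of blocks carries positive density of indices, evaluating the averages along the times that end a block gives $\limsup_{n}\frac{1}{n}\sum_{i=0}^{n-1}d(w_{i},a_{i})\geq\rho/4$, a bound independent of $\delta$. Choosing $\epsilon<\rho/4$ at the outset yields the contradiction and completes the proof. The main obstacle is the passage from the failure of chain transitivity to the trapping region $U$ in the set-valued (IFS) setting --- the Conley attractor--repeller decomposition for the induced closed relation --- together with keeping the lower bound $\rho/4$ uniform in $\delta$; the observation that the attractor traps \emph{every} admissible orbit, not merely those following a distinguished symbol sequence, is what makes the single dichotomy suffice.
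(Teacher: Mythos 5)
Your proposal is correct in substance but follows a genuinely different route from the paper's. The paper proves chain transitivity directly: for arbitrary $x,y$ it fixes a single symbol $\lambda'$, uses surjectivity to build a backward $f_{\lambda'}$-orbit $y_{N_0-2},\ldots,y_1,y$ terminating at $y$, concatenates it cyclically with the forward segment $x,f_{\lambda'}(x),\ldots,f_{\lambda'}^{N_0}(x)$ to get a periodic $\delta$-average-pseudo-orbit, and then extracts from the orbit of an average-shadowing point $z$ two times $i_0<l_0$ at which that orbit is $2\epsilon$-close to the forward block and to the backward block respectively; splicing the three segments yields an honest $\epsilon_0$-chain from $x$ to $y$. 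You instead argue by contraposition through a trapping-region (attractor--repeller) dichotomy and exhibit one pseudo-orbit, alternating long blocks in the attractor $A$ and in its complement $C$, that defeats every candidate shadowing point uniformly in $\delta$; surjectivity enters for you to guarantee an infinite orbit inside $C$, whereas in the paper it supplies the preimages of $y$. Your version is more structural, gives a quantitative obstruction of definite size $\rho/4$, and avoids the paper's case split on whether $y$ lies in the orbit of $x$, at the cost of the Conley-type lemma that failure of chain transitivity yields a proper open $U$ with $\bigcup_{\lambda}f_{\lambda}(\overline{U})\subseteq U$; you should prove this rather than cite it, though it is short: if no $\epsilon_0$-chain runs from $x$ to $y$, let $U$ be the set of endpoints of $\epsilon_0$-chains issuing from $x$, which is open, nonempty, omits $y$, and satisfies the trapping inclusion by uniform continuity. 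One genuine repair is needed in your induction that $C_n\neq\emptyset$ forces $C_{n+1}\neq\emptyset$: the reason you give (``every point of $C$ would map into $U$ under every map'') is only valid for $n=0$, since $C_{n+1}=\emptyset$ merely says every point of $C$ maps outside $C_n$. For general $n$, pick $q\in C_n$, use surjectivity to find $w$ and $\lambda$ with $f_{\lambda}(w)=q$, and observe that $w\notin U$ because $U$ is forward invariant while $q\notin U$; hence $w\in C_{n+1}$. With that fix, and the density and triangle-inequality estimates you sketch (which do check out and give $\limsup_n\frac{1}{n}\sum_{i=0}^{n-1}d(w_i,a_i)\geq\rho/4$ in both branches of the dichotomy), the argument is complete.
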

\begin{proof}
Suppose $\epsilon_{0}$ is an arbitrary positive number and\\ $D=max_{(x,y)\in X\times X}d(x,y).$ Take $0<\epsilon<\frac{\epsilon_{0}}{2}$ such that if $d(x,y)<2\epsilon$, then $d(f_{\lambda}(x),f_{\lambda}(y))<\epsilon_{0}$, for all $\lambda\in\Lambda$. Let $\delta>0$ be an $\epsilon$ modulus the average shadowing property $\mathcal{F}$ and $N_{0}$ a sufficient large natural number which $\frac{3D}{N_{0}}<\delta.$\\ Suppose  $x, y$ are two different points of $X$. We assume that $y$ is not in the positive orbit of $x$, otherwise is clear.\\
Fix $\lambda^{'}\in\Lambda$. Put $T_{1}=\{t\in X:f_{\lambda^{'}}(t)=y\}$. Take a point $y_{1}\in T_{1}$. Again we consider a subset $T_{i}$ of $X$ and take a point $y_{i}\in T_{i}$ satisfying $$f_{\lambda^{'}}(T_{i})=y_{i-1}, 1<i\leq N_{0}-2.$$ Set a sequence $\{x_{i}\}_{i\geq 0}$ of points by
\\ $x_{i} =\left\lbrace
 \begin{array}{c l}
  f_{\lambda^{'}}^{[i ~mod ~2N_{0}]}(x)& \text{if $[i ~mod ~2N_{0}]\in [0,N_{0}]$}\\
  y_{2N_{0}-([i ~mod ~2N_{0}]+1)}& \text{if $[i ~mod ~2N_{0}]\in [N_{0}+1,2N_{0}-2]$}\\
  y & \text{if $[i ~mod~ 2N_{0}]= 2N_{0}-1.$}
  \end{array},
\right. $\\
Then for $n\geq N_{0},$\\
$$\frac{1}{n}\sum_{i=0}^{n-1}d(f_{\lambda^{'}}(x_{i}),x_{i+1})<\frac{1}{n}.\frac{n}{N_{0}}3D\leq \frac{3D}{N_{0}}<\delta.$$
Therefore $\{x_{i}\}_{i\geq 0}$ is a cyclic $\delta-$average-pseudo-orbit of $f_{\lambda^{'}}$ and consequently is a cyclic $\delta-$average-pseudo-orbit of $\mathcal{F}$. Hence there exist $z\in X$ and $\sigma\in \Lambda^{\mathbb{Z}_{+}}$  such that
$$\limsup_{n\rightarrow\infty}\frac{1}{n}\sum_{i=0}^{n-1}d(\mathcal{F}_{\sigma_{i}}(z),x_{i})<\epsilon.$$
Put $S_{1}=\{x, f_{\lambda^{'}}(x),...,f_{\lambda^{'}}^{N_{0}}(x)\} $ and $S_{2}=\{y_{N_{0}-2},...,y_{1},y\} $. Since $\{x_{i}\}_{i\geq 0}$ is a periodic sequence of period $2N_{0}$ constructed from \\$\{x, f_{\lambda^{'}}(x),...,f_{\lambda^{'}}^{N_{0}}(x),y_{N_{0}-2},...,y_{1},y\} $ then there exist infinite increasing sequences $\{i_{1},i_{2},...\}$ and $\{l_{1},l_{2},...\}$ such that \\
$x_{i_{j}}\in S_{1}$ and $d(\mathcal{F}_{\sigma_{i_{j}}}(z),x_{i_{j}})<2\epsilon$ for all $i_{j}\in \{i_{1},i_{2},...\}$.\\ Similarly $x_{l_{j}}\in S_{2}$ and $d(\mathcal{F}_{\sigma_{l_{j}}}(z),x_{l_{j}})<2\epsilon$ for all $l_{j}\in \{l_{1},l_{2},...\}$.\\
Now we can find $i_{0}\in \{i_{1},i_{2},...\}$ and $l_{0}\in \{l_{1},l_{2},...\}$ with $i_{0}<l_{0}$\\
such that $x_{i_{0}}\in S_{1}$ and $x_{l_{0}}\in S_{2}.$ This implies that
\begin{center}
$d(\mathcal{F}_{\sigma_{i_{0}}}(z),x_{i_{0}})<2\epsilon$
and $d(\mathcal{F}_{\sigma_{l_{0}}}(z),x_{l_{0}})<2\epsilon.$
\end{center}
Let $x_{i_{0}}=f_{\lambda^{'}}^{j_{1}}(x)$ and $x_{l_{0}}=y_{j_{2}}$ for some $j_{1}>0, j_{2}>0$.
So this is clear that
\begin{center}
    $x,f_{\lambda^{'}}(x),...,f_{\lambda^{'}}^{j_{1}}(x)=x_{i_{0}}$\\
    $\mathcal{F}_{\sigma_{i_{0}+1}}(z),\mathcal{F}_{\sigma_{i_{0}+1}}(z),...,\mathcal{F}_{\sigma_{l_{0}-1}}(z)$\\
    $x_{l_{0}}=y_{j_{2}},y_{j_{2}-1},...,y.$
\end{center}
is an $\epsilon_{0}-$pseudo-orbit from $x$ to $y$.
\end{proof}
\begin{remark}
Example \ref{e7} shows that the converse of the Theorem
\ref{TT7} is not true.
\end{remark}
\section{Examples}In this section we are going to construct examples of iterated function systems and investigate the shadowing property for them.
\begin{example}
Let $X=\{a_{1},...,a_{n}\}$ be a finite set with the discrete metric $d$. Suppose $\{f_{\lambda}\}_{\lambda\in\Lambda}$ is the family of all surjective functions on $X$. For each arbitrary sequence $\{x_{i}\}_{i\geq 0}$ there exists $z\in X$ and   $\sigma^{'}\in \Lambda^{\mathbb{Z}_{+}}$ such that $\mathcal{F}_{\sigma_{i}}(z)= x_{i})<\epsilon$ for all $n\geq 0$. Then  $ \mathcal{F}=\{X; f_{\lambda}|\lambda\in\Lambda\}$ has the average shadowing property.
\end{example}
Next example shows that the Sierpinski's IFS has the average shadowing property.
\begin{example} (Sierpinski Gasket) Consider the solid (filled) equilateral triangle with vertices at $(0,0), ~(1,0)$ and $(\frac{1}{2},\frac{\sqrt{3}}{2})$.
Now we define the following iterated function system on $X$, that called the Sierpinski IFS(see Figure \ref{f9})
\cite[pp, 40-60]{[B]}.
\begin{figure}\label{f9}
\begin{center}
\includegraphics[width=15 cm , height= 7cm]{serpin.pdf}\\
\end{center}
\begin{center} \caption{}
\end{center}
\end{figure}
\[f_{1}(\texttt{x})=\left[\begin{array}{cc}
 \frac{1}{2} & 0\\
  0& \frac{1}{2}\end{array}
\right]\texttt{x}.\]
\[f_{2}(\texttt{x})=\left[\begin{array}{cc}
 \frac{1}{2} & 0\\
  0& \frac{1}{2}\end{array}
\right]\texttt{x}+
\left[\begin{array}{c}
                      \frac{1}{2} \\
                      0
                    \end{array}\right]\]

\[f_{3}(\texttt{x})=\left[\begin{array}{cc}
 \frac{1}{2} & 0\\
  0& \frac{1}{2}\end{array}
\right]\texttt{x}+
\left[\begin{array}{c}
                      \frac{1}{4} \\
                      \frac{\sqrt{3}}{4}
                    \end{array}\right]\]
This is clear that $ \mathcal{F}=\{X; f_{1},f_{2},f_{3}\}$ is uniformly contracting and by Theorem \ref{tc} has the average shadowing property.
\\
Let $S$ denote the Sierpinski Gasket generated by $\mathcal{F}$ (see \cite[pp, 40-0]{[B]},for more details), similar argument shows that the IFS $ \mathcal{G}=\{S; f_{1},f_{2},f_{3}\}$ has the average shadowing property and every point $x\in S$ is a chain recurrent for $\mathcal{G}$.\end{example}
 \begin{definition}
 The iterated function system $\mathcal{F}=\{X; f_{\lambda}|\lambda\in\Lambda\} $ is minimal if each closed subset $A \subset X$ such that $f_{\lambda}(A) \subset A$ for all $\lambda\in\Lambda$, is empty or coincides with $X$.\\
Equivalently, for a minimal iterated function system $\mathcal{F}=\{X; f_{\lambda}|\lambda\in\Lambda\} $, for any $x \in X$ the collection
of iterates $f_{\lambda_{1}} o ... o f_{\lambda_{k}} (x),~$ $k>0$ and $\lambda_{1},...,\lambda_{k}\in\Lambda$, is dense in $X$\cite{[R]}.\end{definition}
In the next example we introduce a minimal IFS that has the average shadowing property.\begin{example}\label{e3}
Take the following maps $f_{1}, f_{2}:\mathbb{R}\rightarrow \mathbb{R}$ given by $$f_{1}(x)=(\frac{1}{2}+2\alpha)x-\alpha;~~~~~f_{2}(x)=(\frac{1}{2}+2\alpha)x+\frac{1}{2}-\alpha$$ where $0<\alpha<\frac{1}{4}.$ This is clear that $\mathcal{G}=\{\mathbb{R}; f_{1}, f_{2}\} $ is an uniformly contracting IFS. By Theorem 7.1 of \cite{[B]} and Lemma 4.1 of \cite{[Y]}, there exist a compact subset $A$ of $\mathbb{R}$ with nonempty interior such that $f_{1}(A)\subset A$, $f_{2}(A)\subset A$ and $\mathcal{F}=\{A; f_{1}, f_{2}\} $ is minimal. But $\mathcal{F} $ is an uniformly contracting IFS and by Theorem \ref{tc} has the average shadowing property.
\end{example}
\begin{remark}
\par(a) Because of Theorem \ref{tp} the IFS \\
$\mathcal{K}=\{\mathbb{R}\times\mathbb{R}; f_{1}\times f_{1},~f_{1}\times f_{2},~f_{2}\times f_{1},~f_{2}\times f_{2}\}$ has the average shadowing. \\In this case $\mathcal{K}^{'}=\{\mathbb{A}\times\mathbb{A}; f_{1}\times f_{1},~f_{1}\times f_{2},~f_{2}\times f_{1},~f_{2}\times f_{2}\}$ is minimal and has the average shadowing property.\\
(b)By Theorem \ref{tt} the IFS $\mathcal{K}=\{\mathbb{R}; f^{2}_{1},~f_{1}o f_{2},~f_{2}o f_{1},~f^{2}_{2}\}$ has the average shadowing property.
\end{remark}
Let us recall some notions related to symbolic dynamics. \\Let $\Sigma_{2}\{(s_{0}s_{1}s_{2}...)|s_{i}=0 or ~1\}$. We will
refer to elements of $\Sigma_{2}$ as points in $\Sigma_{2}$. Let $s=s_{0}s_{1}s_{2}...$ and $t=t_{0}t_{1}t_{2}...$ be points  in $\Sigma_{2}$. We denote the distance between $s$ and $t$ as $d(s,t)$ and define it by \\$d(s,t) =\left\lbrace
 \begin{array}{c l}
  0,& \text{ $s=t$}\\
  \frac{1}{2^{k-1}},& \text{$k=min\{i; s_{i}\neq t_{i}\}$}\end{array}
\right.$
\begin{example}
Let $f_{0},~f_{1}:\Sigma_{2}\rightarrow\Sigma_{2}$ are two map defined as \\$f_{0}(s_{0}s_{1}s_{2}...)=0s_{0}s_{1}s_{2}...$ and $f_{1}(s_{0}s_{1}s_{2}...)=1s_{0}s_{1}s_{2}...$ for each $s=s_{0}s_{1}s_{2}...\in\Sigma_{2}$.
\\ This is clear that $\mathcal{F}=\{\Sigma_{2}; f_{0}, f_{1}\} $ is an uniformly contracting and by Theorem \ref{tc} has the average shadowing property.\\
Please note that the IFS $\mathcal{F}^{k}$ also have the average shadowing property, for all $k>1$.\\
For example if $k=2$ then $\mathcal{F}^{2}=\{\Sigma_{2}; g_{0}, g_{1},g_{2},g_{3}\} $ where
$$g_{0}(s_{0}s_{1}s_{2}...)=f_{0}of_{0}(s_{0}s_{1}s_{2}...)=00s_{0}s_{1}s_{2}....,$$
$$g_{1}(s_{0}s_{1}s_{2}...)=f_{1}of_{0}(s_{0}s_{1}s_{2}...)=10s_{0}s_{1}s_{2}....,$$
$$g_{2}(s_{0}s_{1}s_{2}...)=f_{0}of_{1}(s_{0}s_{1}s_{2}...)=01s_{0}s_{1}s_{2}....,$$
$$g_{3}(s_{0}s_{1}s_{2}...)=f_{1}of_{1}(s_{0}s_{1}s_{2}...)=11s_{0}s_{1}s_{2}...$$
for each $s=s_{0}s_{1}s_{2}...\in\Sigma_{2}$. Clearly $\mathcal{F}^{2}$ is uniformly contracting and has the average shadowing property.
\end{example}
The following example shows that there is an \emph{IFS}, $\mathcal{F}$, on  $S^{1}$ satisfying following:\\
$1.$ $\mathcal{F}$ does not satisfy the average shadowing property;\\
$2.$Every point $x$ in  $S^{1}$ is a chain recurrent. 
\begin{example}\label{e7}
Let $\pi:[0,1]\rightarrow S^{1}$ be a map defined by \\$\pi(t)=(\cos(2\pi t),\sin(2\pi t))$
Fix $0<a<1$. Let $F_{1}:[0,1]\rightarrow[0,1]$ be a homeomorphism defined by \\
 $F_{1}(t) =\left\lbrace
 \begin{array}{c l}
  t+(a-t)t& \text{if $0\leq t\leq a$}\\
  t+(1-t)(t-a)& \text{if $a\leq t\leq 1$} \end{array}
\right. $ \\
And
 $F_{2}:[0,1]\rightarrow[0,1]$ be a homeomorphism defined by \\
 $F_{2}(t) =\left\lbrace
 \begin{array}{c l}
  t+(a-t)t^{2}& \text{if $0\leq t\leq a$}\\
  t+(1-t^{2})(t-a)& \text{if $a\leq t\leq 1$} \end{array}
\right. $\\
Assume that $f_{i}$ is homeomorphisms on $S^{1}$ defined by\\ $f_{i}(\cos(2\pi t),\sin(2\pi t))=(\cos(2\pi F_{i}(t)),\sin(2\pi F_{i}(t)))$, for $i\in\{0,1\}$. \\Consider  $\mathcal{F}=\{S^{1}, f_{\lambda}|\lambda\in\{0,1\}\}$, by Theorem \ref{t2}  $\mathcal{F}$
does not have the average shadowing property. \\
Now we show that every point $x$ in  $S^{1}$ is a chain recurrent for $\mathcal{F}.$  Suppose $x$ is a non-fixed point in $S^{1}$ and $y=\pi^{-1}(x)$. Given $\epsilon>0$, there exist $\delta>0$ such that $|s-t|<\delta$ implies $d(\pi(s),\pi(t))<\epsilon.$ By definition of $F_{1}$ this is clear that if $0<t<a$ then $\{F_{1}^{n}(t)\}_{n\geq 0}$ is an increasing sequence that converges to $a$. Similarly if $a<t<1$ then $\{F_{1}^{n}(t)\}_{n\geq 0}$ is an increasing sequence that converges to $1$. Suppose $a<y<1$, there is a $\delta-$chian, $y=y_{0},...y_{N}=1$ from $y$ to $1$ and a $\delta-$chain, $0=y_{N+1},...,y_{N+K}=y$ from $0$ to $y$ contains $a$. Hence $x_{0},...,x_{N+K}$ is an $\epsilon-$chain from $x$ to $x$. Then $x$ is a chain recurrent of $f_{1}$ and consequently is a chain recurrent of $\mathcal{F}.$ The case of $0<y<a$ is similar.
\end{example}


\end{document}